\let\csname equation*\endcsname\relax
\let\csname endequation*\endcsname\relax
\newcounter{oftheorem}[section]
\newenvironment{mytheorem}[1]%
{\begin{trivlist}
     
     \refstepcounter{oftheorem}
     \item[\hspace{\labelsep}\bf\thesection.\arabic{oftheorem} #1.]}%
{\end{trivlist}}
\newenvironment{definition}{\begin{mytheorem}{Definition}}{\end{mytheorem}}
\newenvironment{proposition}{\begin{mytheorem}{Proposition}\it}{\end{mytheorem}}
\newenvironment{lemma}{\begin{mytheorem}{Lemma}}{\end{mytheorem}}
\providecommand*{\threeD}{\textsc{3d}}
\newcommand{\FOURVEC}[4]{\left(\begin{array}{l}#1\\#2\\#3\\#4\end{array}\right)}
\newcommand{\TWOVEC}[2]{\left(\begin{array}{l}#1\\#2\end{array}\right)}
\newcommand{\numu}{n_{\text{u}}}
\newcommand{\nums}{n_{\text{s}}}
\newcommand{\uu}{u_{\text{u}}}
\newcommand{\us}{u_{\text{s}}}
\newcommand{\vu}{v_{\text{u}}}
\newcommand{\vs}{v_{\text{s}}}
\newcommand{\Su}{S^{\text{u}}}
\newcommand{\Pu}{P^{\text{u}}}
\newcommand{\Ss}{S^{\text{s}}}
\newcommand{\Ps}{P^{\text{s}}}
\newcommand{\Wu}{W^{\text{u}}}
\newcommand{\Ws}{W^{\text{s}}}
\newcommand{\tu}{t_{\text{u}}}
\newcommand{\ts}{t_{\text{s}}}
\newcommand{\Eu}{E^{\text{u}}}
\newcommand{\Es}{E^{\text{s}}}
\newcommand{\Esu}{E^{\text{s,u}}}
\newcommand{\fu}{f^{\text{u}}}
\newcommand{\fs}{f^{\text{s}}}
\newcommand{\xh}{x^{\text{h}}}
\newcommand{\yh}{y^{\text{h}}}
\newcommand{\xe}{x^{\text{e}}}
\newcommand{\ye}{y^{\text{e}}}
\newcommand{\deltac}{\delta_{\text{c}}}
\newcommand{\lambdau}{\lambda_{\text{u}}}
\newcommand{\lambdas}{\lambda_{\text{s}}}
\newcommand{\ui}{\text{i}}
\newcommand{\ud}{\text{d}}
\newcommand{\movierefall}{For a rotating view see
\href{http://www.comp-phys.tu-dresden.de/supp/}{http://www.comp-phys.tu-dresden.de/supp/}.}
\begin{document}
\title{Homoclinic Points of 2-D and 4-D Maps via the Parametrization Method}

\author{Stavros Anastassiou$^1$, Tassos Bountis$^{1,2}$ and Arnd B\"acker$^{3,4}$}
\address{$^1$Center of Research and Applications of Nonlinear Systems (CRANS),\\
 University of Patras, Department of Mathematics,\\ GR-26500 Rion, Greece}
\address{$^2$Department of Mathematics,School of Science and Technology,\\
          Nazarbayev University, Kabanbay-batyr, 53, 010000 Astana, Kazakhstan}
\address{$^3$Technische Universit\"{a}t Dresden,\\
          Institut f\"{u}r Theoretische Physik and Center for Dynamics, 01062 Dresden, Germany}
\address{$^4$Max-Planck-Institut f\"{u}r Physik komplexer Systeme, N\"{o}thnitzer Strasse 38,\\
 01187 Dresden, Germany}

 \ead{SAnastassiou@gmail.com}

\begin{abstract}

An interesting problem in solid state physics is to compute discrete breather solutions in
$\mathcal{N}$ coupled 1--dimensional Hamiltonian particle chains and investigate the richness
of their interactions. One way to do this is to compute the homoclinic intersections of
invariant manifolds of a saddle point located at the origin of a class of $2\mathcal{N}$--dimensional
invertible maps. In this paper we apply the parametrization method to express these manifolds
analytically as series expansions and compute their intersections numerically to high precision.
We first carry out this procedure for a 2--dimensional (2--D)  family of generalized H\'{e}non maps
($\mathcal{N}=1$), prove the existence of a hyperbolic set in the non-dissipative case and show that
it is directly connected to the existence of a homoclinic orbit at the origin. Introducing dissipation
we demonstrate that a homoclinic tangency occurs beyond which the homoclinic intersection disappears.
Proceeding to $\mathcal{N}=2$, we use the same approach to accurately determine the homoclinic intersections of the
invariant manifolds of a saddle point at the origin of a 4--D map consisting of two coupled 2--D cubic H\'{e}non maps.
For small values of the coupling we determine the homoclinic intersection,
which ceases to exist once a certain amount of dissipation is present.
We discuss an application of our results to the study of discrete breathers in two linearly coupled
1--dimensional particle chains with nearest--neighbor interactions and a Klein--Gordon on site potential.
\end{abstract}

\vspace{2pc}
\noindent{\it Keywords}: invariant manifolds, polynomial H\'{e}non maps, parametrization method, discrete breathers
\textbf{}

\maketitle

%===========================================================================================================
\section{Introduction}
\label{introduction}

An important topic in the study of the dynamics of 1--dimensional lattices (or chains) of nonlinearly
interacting particles is their ability to support under rather general conditions a very interesting type
of localized oscillations called discrete breathers (see \cite[Chpt.~7]{BouSko2012}, \cite{flach}). These
solutions simply execute periodic motion and involve one or more central particles that carry most of the
energy, while all others in their immediate vicinity have amplitudes that vanish exponentially as the index of
the particle $n$ goes to $+\infty$ or $-\infty$. Take for example the so--called Klein-Gordon
system of ordinary differential equations written in the form
\begin{equation}
\ddot u_n=-V^\prime\left(u_n\right)+\alpha\left(u_{n+1}-2u_n+u_{n-1}\right), \qquad
V(x)=\frac{1}{2}Kx^2+\frac{1}{4}x^4,\label{eq4}
\end{equation}
where $u_n$ for $-\infty<n<\infty$ is the amplitude of the $n$-th particle, $\alpha>0$ is a parameter indicating the strength of coupling between nearest neighbors,
and $V(x)$ is the on-site potential with primes denoting differentiation with respect to the argument of $V(x)$.

To construct such a discrete breather solution one may insert a Fourier series
\begin{equation}
\label{eq5}
u_n(t)=\sum_{k=-\infty}^{\infty}A_n(k)\exp(\ui k\omega_bt)
\end{equation}
in the equations of motion (\ref{eq4}), where $\omega_b$ is the frequency of the breather.
Setting the terms proportional to the same exponential equal to
zero one obtains the system of equations
\begin{equation}
\label{infdimmap}
-k^2\omega_b^2A_n(k)= \alpha\left(A_{n+1}(k)-2A_n(k)+A_{n-1}(k)\right) - KA_n(k)-\sum_{k_1,k_2,k_3}A_n(k_1)A_n(k_2)A_n(k_3),
\end{equation}
where $k_1+k_2+k_3=k, \forall k,\ n\in\mathbb{Z}$.
This equation defines an infinite-dimensional mapping in the space of
Fourier coefficients $A_n(k)$. Time-periodicity is ensured by the Fourier basis functions
$\exp\left(\ui k\omega_bt\right)$, while spatial localization
requires that $A_n(k)\rightarrow 0$ exponentially as $|n|\rightarrow \infty$.

If we want to construct a breather solution one can start from its lowest order approximation
by substituting $u_n(t)=2A_n(1)\cos(\omega_bt)$ in (\ref{infdimmap}) and obtain a 2--dimensional map for the largest coefficient
$A_n(1)=A_n(-1)=A_n$ as follows
\begin{equation}
-\omega_b^2A_n= \alpha(A_{n+1}-2A_n+A_{n-1})-KA_n-3A_n^3,\,\,\,\
\end{equation}
which may be written in the form
\begin{equation}
 A_{n+1}=-A_{n-1}+\frac{1}{\alpha}\left(2+K-\omega_b^2\right)A_n + \frac{3}{\alpha}A_n^3\,\,\,\label{2dimAnmap}.
\end{equation}
If we now define $x_n=A_n$ and $y_n=A_{n-1}$ the mapping \eqref{2dimAnmap} takes the 2--dimensional (2--D) form
\begin{subequations}\label{2dimxnynmap}
\begin{align}
     x_{n+1} &= -y_n+Cx_n +\frac{3}{\alpha}x_n^3, \\
     y_{n+1} &= x_n
\end{align}
\end{subequations}
where $C=(2+K-\omega_b^2)/\alpha$. This mapping
is area-preserving (and also symplectic) since the determinant of its Jacobian is unity for all $x_n$ and $y_n$.
The above approach has proved quite useful in the past and has led to a wide variety of interesting results
concerning the computation and dynamics of breathers and multibreathers in a large family of 1--dimensional
Hamiltonian lattices \cite{BCKRBW,BBJ,BBV}. To our knowledge, such a study has not yet been carried out
for coupled Hamiltonian chains of this type.

Breather solutions of the original chain (\ref{eq4}) by definition must have large values of $|A_n|$ for small $n=0,\pm1, \pm2,...$, while $A_n\rightarrow 0$ as $n\rightarrow \pm\infty$. This implies that these amplitudes can be identified as homoclinic orbits lying at the intersections of stable and unstable manifolds of the origin, which if hyperbolic is necessarily  a saddle point of the map (\ref{2dimxnynmap}). For any homoclinic intersection point, its orbit under forward (backward) iteration along the stable (unstable) manifold asymptotically converges to the origin. Thus the
requirement that $A_n\rightarrow 0$ as $n\rightarrow \pm\infty$ is fulfilled at the outset.
To locate such homoclinic orbits accurately, one must be able to write down precise expressions for the curves representing these manifolds and compute their points of intersection.

The purpose of this paper is twofold: First, we develop and apply the parametrization method to compute intersections of invariant stable and unstable manifolds of a certain class of 2--D and 4--D invertible maps. These are the types of maps used to obtain the largest coefficients $A_n$ and $B_n$ in the Fourier expansion of the $n$--th particle of two coupled chains. So far, such 2--D maps have been successfully used to obtain such $A_n$ approximation for 1--D Hamiltonian particle chains with Klein-Gordon on-site potential (see \cite{BCKRBW,BBJ,BBV,Bergamin}). Second, we locate the coordinates of the homoclinic points of such mappings, along with the critical value of the dissipation parameter for which homoclinic intersections no longer exist. The accuracy of our computations of homoclinic points of 4--D maps is very encouraging, as an application of these techniques to coupled 1D particle chains is now possible.

The success of this approach in studying the existence of breathers relies on the fact that breather computation is achieved by using rapidly convergent Newton schemes to compute the breathers as simple periodic orbits of a Hamiltonian system of differential equations. As is well--known, this crucially relies on having an accurate first approximation. This is why the knowledge of the first Fourier coefficients from the homoclinic orbits of the corresponding maps is so useful. Indeed, higher Fourier coefficients are not needed, since they are obtained through the convergence of the Newton method to the exact breather solution.

The paper is structured as follows: First, in section 2, we introduce the 2--D mapping of interest here and prove its hyperbolic behavior for the symplectic case. In section \ref{parametrization} we explain the idea of the parametrization method and how we use it to obtain power series for the manifold equations. In section 4 we solve these equations numerically and obtain the homoclinic intersections for a cubic map of the form (\ref{2dimxnynmap}). Finally, in section \ref{4-d case}, we apply our approach to a 4--D map with cubic nonlinearities and compute manifold intersections for various parameter values, commenting also  on the accuracy limitations encountered by our numerical algorithms in this process. We close with our concluding remarks in section \ref{conclusions}.
%The purpose of the present paper is to derive such expressions for the stable and unstable manifolds and determine their intersections not only for a 2--D but also a 4--D map that
%represents an extension of (\ref{2dimxnynmap}) and can be applied to locate breathers in two linearly coupled particle chains of the form (\ref{eq4}).
%=========================================================================
\section{Hyperbolicity in a Family of 2--Dimensional H\'{e}non Maps}
\label{hyperbolicity}

\subsection{Generalized H\'{e}non maps}

Nearly 45 years ago the French astronomer Michel H\'{e}non introduced a 2--D mapping of the plane onto itself with the simple form \cite{henon}
\begin{equation}
\label{H2dmap}
 h:\mathbb{R}^2\rightarrow \mathbb{R}^2,\ h(x,y)=(1+y-ax^2,bx),
\end{equation}
which exhibits very interesting phenomena related to chaos, bifurcations and strange attractors for different
values of its parameters $a>0$ and $|b|\leq1$. In fact, the occurrence of some  of the most important properties of (\ref{H2dmap})
have been related to the transition from simple dynamics
to hyperbolicity (see e.g.~\cite{devnit, russo, SteDulMei1999}). This implies that there are
dense sets of chaotic orbits lying at the intersections of invariant manifolds of
unstable (saddle) fixed points and periodic orbits \cite{wiggins1}.

Nowadays, however, it is more common to consider instead of (\ref{H2dmap}) its conjugate expression
\begin{equation}
\label{TH2map}
 h_s:\mathbb{R}^2\rightarrow \mathbb{R}^2,\ h(x,y)=(y,-bx+a-y^2),
\end{equation}
which is more convenient to generalize both in form as well as number of dimensions, as described for
example in \cite{lomelimeiss} and \cite{gonmeov}. Thus, let us consider the family of generalized H\'{e}non maps
of the plane onto itself defined by
\begin{equation}
\label{GH2map}
 H:\mathbb{R}^2\rightarrow \mathbb{R}^2,\ H(x,y)=(y,-\delta x+p(y)),
\end{equation}
where $p(y)$ is a univariate polynomial.
This class is of particular importance as any
polynomial mapping of the plane having a polynomial inverse
(i.e.\ any member of the 2--D affine Cremona group)
is either a composition of mappings of the form (\ref{GH2map}), or possesses
trivial dynamics \cite{milnor}.

The main properties of the generalized H\'{e}non family (\ref{GH2map}) are the following: First,
the inverse of $H(x,y)$ is explicitly given as $H^{-1}(x,y)=(-\frac{1}{\delta}y+\frac{1}{\delta}p(x),x)$.
Moreover, if the polynomial $p(y)$ is odd, the mapping $H(x,y)$ is symmetric under the transformation
$\sigma (x,y)=(-x,-y)$, which implies $H\circ \sigma =\sigma \circ H(x,y)$.
For $\delta =1$ the mapping $H(x,y)$ is a symplectomorphism (or symplectic map), as
it preserves the natural symplectic form of the plane,
$\ud x\wedge \ud y$. In addition, $H$ is also differentially
conjugate to its inverse, since, if we define $\rho (x,y)=(y,x)$, it follows that $H\circ \rho = \rho \circ H^{-1}$ holds.
As a consequence, in the symplectic case, invariant sets of $H(x,y)$ are related to invariant
sets of $H^{-1}$ by the transformation $\rho$.

Generalized H\'{e}non maps have attracted a lot of attention in the literature.
For example, in \cite{dullinmeiss}
the dynamics of $H(x,y)$ is studied, where $p(y)$ is a polynomial of third degree, while in \cite{gonkuzmei}
bifurcations of homoclinic tangencies are considered, involving intersections of invariant manifolds,
for a mapping that has many similarities with (\ref{GH2map}) above. Furthermore, we note that in \cite{zhang}
sufficient conditions are given for hyperbolicity in generalized H\'{e}non maps for arbitrary polynomials $p(y)$.

In the present work we study a class of maps (\ref{GH2map}) that corresponds to the choice of a cubic polynomial $p(y)$ and generalize our results to the case of 4--D maps, as discussed in section \ref{introduction} in connection with applications to discrete breathers in systems of 1--dimensional Hamiltonian particle chains \cite{BouSko2012,BCKRBW,BBJ,BBV}.
\\

\subsection{The 2--dimensional cubic map}

Let us focus now on the dynamics of the recurrence relation:
\begin{equation}
A_{n+1}=cA_n-\delta A_{n-1}+3A_n^3.
\end{equation}
which follows from (\ref{2dimAnmap}) by a simple rescaling of the
Fourier coefficients, while a parameter $0\leq \delta \leq 1$ has been introduced to account for dissipation effects.
Setting $A_{n-1}=x,\ A_n=y$, we define the following cubic map of
the plane $x,y$ onto itself
\begin{equation} \label{eq:our-cubic-map}
f:\mathbb{R}^2\rightarrow \mathbb{R}^2,\ (x,y)\mapsto (y,-\delta x+cy+3y^3),
\end{equation}
which corresponds to the generalized H\'{e}non map \eqref{GH2map}
for $p(y) = cy+3y^3$.
Its inverse is given by
\begin{equation}
f^{-1}(x, y) = \left(\frac{c}{\delta}x
               -\frac{1}{\delta}y+\frac{3}{\delta} x^3,
               x\right).
\end{equation}
For $\delta=1$ the mapping $f$ is a symplectomorphism
and is invariant under the symmetry $\sigma(x,y) = (-x,-y)$, i.e.\
the relation $f\circ \sigma = \sigma \circ f$ holds.

%---------------------------------------------------------------------
\subsection{Existence of a hyperbolic set in the symplectic case $\delta =1$}

The generalized H\'{e}non map is known to have rich dynamics. In fact, in \cite{zhang}
the existence of hyperbolic sets for such maps has been proven for a wide range of parameter
values.
\begin{definition}
 Let $g:M\rightarrow M$ be a diffeomorphism and $\Lambda$ a compact subset of $M$, invariant
 under this diffeomorphism. $\Lambda$ is said to be a hyperbolic set for $g$
 if $\forall x\in \Lambda$ the following is true:
 \begin{enumerate}
  \item {$T_x M=\Es_x\oplus \Eu_x$,}
  \item {$d_xg(\Esu_x)=\Esu_{g(x)}$,}
  \item {$\| d_xg|_{\Es_x}\| <\lambda,\  \| d_xg^{-1}|_{\Eu_x}\| <\mu ^{-1}$, for $0<\lambda <1<\mu$.}
 \end{enumerate}
\end{definition}

These conditions imply that the tangent space of $M$ at every point of $\Lambda$ is the direct sum of two
subspaces $\Es_x$ and $\Eu_x$ which are invariant under the action of the differential of $g$.
Moreover on these subspaces the differential acts as a contraction and dilation, respectively.

In what follows we will be interested in the diffeomorphism $f$ defined by equation~\eqref{eq:our-cubic-map}
for the particular choice of $c=-\frac{5}{2}$ which yields a saddle point at the origin and will be kept fixed in the remainder of this paper. This choice is pictorially convenient, since $c$ values in that range produce large scale manifolds that are clearly visible in the figures. Let us now demonstrate the existence of a hyperbolic set for this system:
\begin{proposition}
Let $f$ be the diffeomorphism~\eqref{eq:our-cubic-map}, with $c=-\frac{5}{2}$ and $\delta =1$. Then, there
exists a hyperbolic set $\Lambda$ on which $f$ is topologically conjugate to the two--sided shift on
three symbols. Moreover, $\Lambda $ consists of all those points whose orbits are bounded under successive iterations of~~\eqref{eq:our-cubic-map}.
\end{proposition}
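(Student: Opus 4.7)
The plan is to build a Smale horseshoe for $f$ with three branches, so that the resulting symbolic dynamics is the full shift on three symbols. Concretely, I would look for a closed topological rectangle $R\subset\mathbb{R}^2$ such that (a) $R\cap f(R)$ is the disjoint union of three ``horizontal'' strips $H_1,H_2,H_3$ crossing $R$ from left to right, (b) $R\cap f^{-1}(R)$ is the disjoint union of three ``vertical'' strips $V_1,V_2,V_3$ crossing $R$ from top to bottom with $f(V_i)=H_i$, and (c) any orbit that ever leaves $R$ is unbounded. The three strips arise because the cubic nonlinearity $p(y)=-\tfrac{5}{2}y + 3y^3$ folds each image arc into an $S$-shape with two critical points at $y=\pm\sqrt{5/18}$, so that the image $f(R)$ can be arranged to cross $R$ three times. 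The symmetry $f\circ\rho=\rho\circ f^{-1}$ from the previous subsection automatically converts the decomposition of $f(R)$ into that of $f^{-1}(R)$, so in practice one only needs to produce the $H_i$.

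With the strips in hand, hyperbolicity in the sense of the Definition reduces to a cone-field argument. Since
\begin{equation}
Df(x,y)=\begin{pmatrix} 0 & 1 \\ -1 & -\tfrac{5}{2}+9y^2 \end{pmatrix}, \qquad \det Df = 1,
\end{equation}
and the single nontrivial entry depends only on $y$, an expanding horizontal cone field and a contracting vertical cone field with constants $0<\lambda<1<\mu$ can be constructed by bounding $|{-\tfrac{5}{2}+9y^2}|$ uniformly away from zero on the $y$-ranges swept out by the $V_i$. Together with the three-strip geometry, this puts us inside the hypotheses of the Conley--Moser theorem (see e.g.\ \cite{wiggins1}), which immediately yields the hyperbolic Cantor set $\Lambda=\bigcap_{n\in\mathbb{Z}}f^n(V_1\cup V_2\cup V_3)$ together with a homeomorphism $h:\Lambda\to\{1,2,3\}^{\mathbb{Z}}$ intertwining $f$ with the full two-sided shift.

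Finally, to identify $\Lambda$ with the set of \emph{all} bounded orbits, I would prove an escape estimate: once $|y_n|>R_0$ the cubic term $3y_n^3$ dominates the linear terms in $f$, forcing $|y_{n+1}|$ (or $|y_{n-1}|$ in backward time, via the explicit formula for $f^{-1}$) to grow without bound, so any orbit that ever leaves $R$ is unbounded. The main obstacle I expect is the very first step---producing a single region $R$ on which the three-strip picture and the cone inequalities hold \emph{simultaneously} at $c=-\tfrac{5}{2}$. Both conditions are most sensitive near $|y|=\sqrt{5/18}$: this is where the cubic $g(y)=3y^3-\tfrac{5}{2}y$ has its local extrema, and also where $-\tfrac{5}{2}+9y^2$ vanishes and the linearization degenerates to an elliptic rotation. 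One must therefore choose $R$ and the $V_i$ so that those critical $y$-values fall in the \emph{gaps} between strips rather than inside them, which ultimately pins down the shape and position of $R$ and forces a slightly non-square, carefully tuned aspect ratio.
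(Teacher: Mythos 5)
Your underlying strategy---a three-branched Conley--Moser horseshoe built from the S-shaped fold of $p(y)=-\tfrac{5}{2}y+3y^3$, with hyperbolicity coming from a cone-field estimate on $Df$ and the ``bounded orbit'' characterization via an escape estimate---is conceptually the same as what the paper uses, because the result it invokes (Theorem~3.3 of \cite{zhang}) is precisely a packaged set of Conley--Moser/cone conditions tailored to maps of the form $(x,y)\mapsto(y,-\delta x+p(y))$. The difference is in the packaging. The paper never tries to exhibit a rectangle $R$ and check the inequalities directly on $f$; instead it first applies the affine rescaling $h_1\circ f=\hat f\circ h_2$ with $a=5$, so that the cubic becomes $375(y^3-y/30)$---same roots, but much steeper branches---and then the three-strip geometry and the expansion/contraction estimates fall out of Zhang's explicit conditions on intervals $V_1,V_2,V_3$ chosen around the roots of $y^3-y/30$. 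That rescaling is the useful trick: it widens the gaps between strips and amplifies $|p'(y)|$ away from its zeros, so the hypotheses are satisfied with slack and the verification reduces to checking a few numerical inequalities. Your plan correctly anticipates exactly where the direct construction is tight---near $|y|=\sqrt{5/18}$, where the cubic folds and $Df$ degenerates to a rotation---but as written it stops where the actual work begins: no concrete $R$ is produced, and the claim that the strips and cone inequalities can be made to hold simultaneously on the unscaled map is asserted rather than established. If you pursue the from-scratch route, the rescaling move is worth adopting, since it is what makes the inequalities comfortable to verify. Finally, note that for your symmetry shortcut $f\circ\rho=\rho\circ f^{-1}$ to convert the horizontal strips into the vertical ones, the region $R$ must itself be invariant under $\rho(x,y)=(y,x)$, i.e.\ symmetric about the diagonal; this is an extra constraint on $R$ that should be stated.
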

\begin{proof}
The proof utilizes theorem 3.3 of \cite{zhang}. However, since it is not directly
applicable to $f$, we first  perform a coordinate change by defining
$h_1,\ h_2:\mathbb{R}^2\rightarrow \mathbb{R}^2$
as $h_1(x,y)=(\frac{1}{a}x,y)$ and $h_2(x,y)=(x,\frac{1}{a}y)$.
Then, $f$ is left--right equivalent to
$\hat{f}:\mathbb{R}^2\rightarrow \mathbb{R}^2$,
defined by $\hat{f}(x,y)=(y,-\delta x+3a^3y^3-\frac{5}{2}ay)$,
that is, $h_1\circ f=\hat{f}\circ h_2$.
Note that this is nothing more than a coordinate transformation of
both the domain of definition and the image of $f$, so, qualitatively,
the behavior of $f$ remains unchanged.

Let us now set $a=5$ in this example. Following the notation used in \cite{zhang}, we notice that $\hat{f}$ is of the form $\hat{f}(x,y)=(y,-\delta x+p(y))$,  where $p(y)=375(y^3-y/30)=375g(y)$, with $g(y)=y^3-y/30$. Determining the roots of $g(y)$ as $\alpha _1=-1/\sqrt{30},\ \alpha _2=0,\ \alpha _3=1/\sqrt{30}$, we consider the intervals $V_1=(-1.03,-0.149),\ V_2=(-0.05,0.05),\ V_3=(0.149,1.03)$.
Having thus chosen the neighborhoods of the map that are of interest, it is easy to verify that all conditions required for the application of Theorem 3.3 in \cite{zhang}
are satisfied, therefore the proposition holds.
\end{proof}

\begin{figure}[b]
\begin{center}
\includegraphics[scale=1.2]{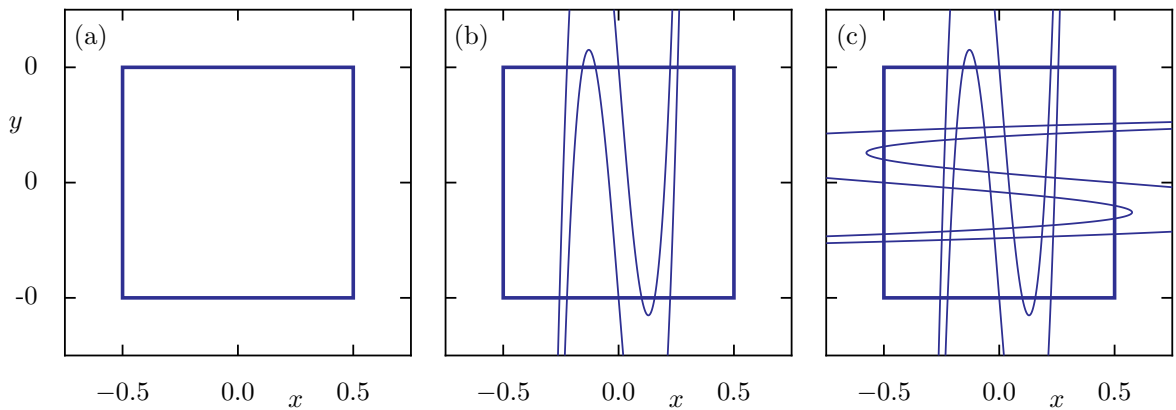}
\end{center}
\caption{\label{threesymbols}%
(a) The square $Q=[-1/2, 1/2]\times [-1/2,1/2]$. (b) The area within the three vertical strips represents all points in the set $Q\cap \hat{f}(Q)$ generated after one forward iteration of all points of $Q$. (c) The points of $Q$ that are still inside $Q$ after one forward and one backward iteration of the map lie in the intersection of the vertical strips of (b) and the horizontal strips shown here. This set, $\hat{f}^{-1}(Q)\cap Q\cap \hat{f}(Q)$, represents the first step in the construction of the hyperbolic set of $\hat{f}$.
}
\end{figure}

Figure \ref{threesymbols} shows the first few steps of the construction of the hyperbolic set for
the above map $\hat{f}$.

Hyperbolic sets persist for small perturbations of the mapping, thus we expect to find hyperbolic
behavior of $f$ for other values of $\delta$ as well. In later sections we show
that the hyperbolic set established by the above proposition owes its existence to a
transverse homoclinic point of $f$. Then we shall follow the intersections
of the invariant manifolds of $f$ emanating from the origin to locate the critical value
$\deltac<1$ at which these manifolds are tangent to each other. This means that for $\delta<\deltac$ homoclinic orbits no longer exist and the chaotic behavior of the map about the origin disappears \cite{papageo}.
To determine the invariant manifolds and compute
their intersections, we make use of
the so--called parametrization method, which we
now briefly outline.

%=======================================================================
\section{Overview of the Parametrization Method}
\label{parametrization}

Let $f:\mathbb{R}^n\rightarrow \mathbb{R}^n$ denote a $C^\infty $ diffeomorphism, having
a hyperbolic fixed point at $p\in \mathbb{R}^n$. Assume that $\Es$ is the eigenspace
of $d_pf:\mathbb{R}^n\rightarrow \mathbb{R}^n$, corresponding to eigenvalues that have norm less than one.
The stable manifold theorem \cite{hps} asserts  that the stable set of $p$,
$\Ws(p):=\{x\in \mathbb{R}^n \;|\; \lim_{n\rightarrow +\infty}f^n(x)=p\}$, is a smooth immersed submanifold of
$\mathbb{R}^n$, tangent to $\Es$ at $p$ (an analogous statement holds for the unstable manifold of $p$).

Knowing how these two manifolds evolve in the phase space of the map offers crucial insight in
the dynamics of the diffeomorphism $f$. Consequently, a number of methods have been developed to compute and
visualize these manifolds. see \cite{survey} for a survey of these methods and more recently \cite{goodman} for the 2--D case. Here we choose the parametrization method for the computation of such manifolds, which presents a number of advantages (see \cite{par3} for more details).

Let us now demonstrate how to compute stable (and unstable) invariant manifolds using the parametrization method.
This method offers a simple and straightforward procedure for computing invariant manifolds of vector fields and
diffeomorphisms as explained in detail in  \cite{par1, par2, par3}. In fact, its applications go far beyond the
topics we examine in this paper (see \cite{partori, jdm, jdm2, HarCanFigLuqMon2016,Mireles-Lomeli,Cap-James} and references therein,
for applications in a wide variety of problems).

The parametrization method builds on the following fact: Under the assumptions imposed  on the diffeomorphism $f$, there exists a
$C^\infty$ injective immersion $S:\Es\rightarrow \mathbb{R}^n$, such that:\\
(a) $S(p)=p$,\\
(b) the derivative of $S$ at $p$ is the inclusion map $\Es\hookrightarrow \mathbb{R}^n$, and\\
(c) $f\circ S=S\circ \fs$, where $\fs$ stands for the restriction of $d_pf$ to $\Es$.\\
This version of the stable manifold theorem may be found in \cite{smale}, and the
immersion $S$ is to be thought of as the parametrization of $\Ws(p)$, considered as
a submanifold of $\mathbb{R}^n$.

Central to the implementation of the parametrization method is the equation
\begin{equation} \label{eq:defining-equation}
  f\circ S=S\circ \fs,
\end{equation}
which from now on we shall call the {\it defining equation} of the stable manifold.
Here $f$ stands for the diffeomorphism of interest, while $\fs$ represents its linear part determined
by the stable eigenvalues of the fixed point. $S$ denotes the parametrization of
the invariant manifold that we wish to compute.

To proceed with this computation, we first need to expand the map $S$ as a
power series. Inserting this power series into the defining equation, we shall
arrive at relations giving the coefficients of terms of degree $n$ as a function of
coefficients of terms of lower degree. What is especially convenient here is the fact that
these relations are linear and are thus easily solved, provided all coefficients of
lower order terms are given (for a specific application see section 4.1).

Making use of the facts (a) and (b) above, one immediately finds that the constant terms of
the power series are none other than the coordinates of the fixed point $p$, while the
coefficients of the linear terms are given by the eigenvectors of the stable
eigenvalues. Thus, solving a set of linear equations as explained above, we may
compute one by one the coefficients of our power series up to arbitrary (but
finite) order. This series does indeed converge under mild assumptions on $f$,
as explained in detail in \cite{par3}.

In a completely analogous way the unstable manifold of $p$ can also be
computed. To accomplish this one simply has to replace $\fs$ in the defining equation \eqref{eq:defining-equation} by $\fu$, that is, the restriction of $d_pf$ onto the
unstable subspace $\Eu$ of $p$. From here on, the procedure explained above is
employed in precisely the same way to provide us with a parametrizion of
the unstable manifold of $p$.

Let us now apply this technique to construct the invariant manifolds of the cubic 2--D diffeomorphism of interest here.

%====================================================================
\section{Invariant manifolds for the cubic diffeomorphism of the plane}
\label{sec:2-dim-map}

Returning to the cubic mapping given in equation~\eqref{eq:our-cubic-map}, let us note that it
possesses three fixed points:
The first one is the origin and exists for all parameter values,
while there are also two symmetric ones, with coordinates
$\left(\pm\frac{\sqrt{1-c+\delta}}{\sqrt{3}},
       \pm\frac{\sqrt{1-c+\delta}}{\sqrt{3}}\right)$.
Focusing at the $(0,0)$ fixed point, we first determine the eigenvalues of the linearized map
\begin{equation}
  \frac{1}{2}\left(c-\sqrt{c^2-4\delta}\right),\
   \frac{1}{2}\left(c+\sqrt{c^2-4\delta}\right),
\end{equation}
and proceed to study its invariant manifolds, beginning with the $\delta =1$
case and continuing with $\delta <1$, where $f$ becomes dissipative. Our
purpose is to locate the homoclinic points which are part of the
hyperbolic set of $f$. In fact, it suffices to locate the primary one at which the manifolds first meet, since it ``generates'', under repeated application
of $f$ and $f^{-1}$, all other points of the associated homoclinic
orbit.

\begin{definition}\cite{wiggins2}
  Let $p\in \Ws(0,0)\cap \Wu(0,0)$, and denote by $S[(0,0),p]$ the segment of $\Ws(0,0)$
  with endpoints $(0,0)$ and $p$ and by $U[(0,0),p]$ the segment of $\Wu(0,0)$ with
  endpoints $(0,0)$ and $p$. The point $p$ is called a primary (homoclinic) intersection (point)
  if $S[(0,0),p]$ intersects $U[(0,0),p]$ only at the points $p$ and $(0,0)$.
\end{definition}

As before, to fix ideas we set $c=-5/2$ and proceed in what follows with the computation of the primary intersection points for $f$, using the parametrization method.

%------------------------------------------------------------------------
\subsection{The symplectic case $\delta =1$}

Since $c=-5/2$ and $\delta=1$, the eigenvalues of the origin are
$\lambdau = -2$ and $\lambdas = -1/2$
with normalized eigenvectors $(-1/\sqrt{5}, 2/\sqrt{5})$ and $(-2/\sqrt{5}, 1/\sqrt{5})$.
The origin is therefore a saddle, with a 1--dimensional stable and a 1--dimensional
unstable manifold, which we now proceed to determine.

Let $\Su:\mathbb{R}\rightarrow \mathbb{R}^2$ be the parametrization of the unstable manifold emanating from the
origin expressed by the expansion
\begin{equation}
\Su(t)=\left(\sum_{n=0}^{+\infty} a_nt^n, \sum_{n=0}^{+\infty}b_nt^n\right).
\end{equation}
The defining equation of this manifold becomes
\begin{equation} \label{eq:defining-equation-stable-manifold}
  f(\Su(t))=\Su(\lambdau t)
\end{equation}
so that in the case of our 2--D map we obtain
\begin{equation}
\TWOVEC{\displaystyle \sum_{n=0}^{+\infty}b_nt^n}
       {\displaystyle -\delta\sum_{n=0}^{+\infty}a_nt^n+c\sum_{n=0}^{+\infty}b_nt^n
                      +3\left(\sum_{n=0}^{+\infty}b_nt^n\right)^3}
= \TWOVEC{\displaystyle \sum_{n=0}^{+\infty}a_{n}\lambdau^nt^n}
         {\displaystyle \sum_{n=0}^{+\infty}b_n\lambdau^nt^n}.
\end{equation}
This gives, after equating terms of the same power of $t$, the following system
\begin{subequations}\label{eq:equation-system-2d}
\begin{eqnarray}
-\lambdau^na_n+b_n &=& 0 \\
-\delta a_n+\left(c+9b_0^2-\lambdau^n\right)b_n &=& s_{n-1},
\end{eqnarray}
\end{subequations}
where $s_{n-1}$ is defined by
\begin{equation}
s_{n-1}:=-3\left(\sum_{j=1}^{n-1}b_0b_{n-j}b_j
                +\sum_{i=1}^{n-1}\sum_{j=0}^{i}b_{n-i}b_{i-j}b_j\right).
\end{equation}
The above is a linear system of equations for the coefficients  $a_n, b_n$ of the power series, whose zero-th
order terms ($a_0, b_0$) are both zero since they represent the coordinates of the fixed point at the origin. The first order terms ($a_1, b_1$) on the other hand are simply the coordinates of the unstable eigenvector.
Thus, we are now ready to compute the constants $a_n, b_n$ for every, finite, value of $n>1$.

To perform the numerical computation of $\Su(t)$ we first truncate the series up to a polynomial
$\Pu(t)$ of degree $N$ which is evaluated using Horner's method. Still this polynomial will only be a
good approximation of the unstable manifold for a restricted range of values of $t$.
The range of validity can be quantified as follows:

\begin{definition} \label{def:epsilon-radius}
  Let $\varepsilon>0$. Define
  $\tau>0$ to be an $\varepsilon$-radius of validity of the polynomial
  approximation $\Pu(t)$ of $\Su(t)$ if $\max_{t\in [-\tau,\tau]} \arrowvert
  \arrowvert f\circ \Pu(t)-\Pu(\lambdau t)\arrowvert \arrowvert <\varepsilon$.
\end{definition}

\begin{figure}
\begin{center}
\includegraphics[scale=1.2]{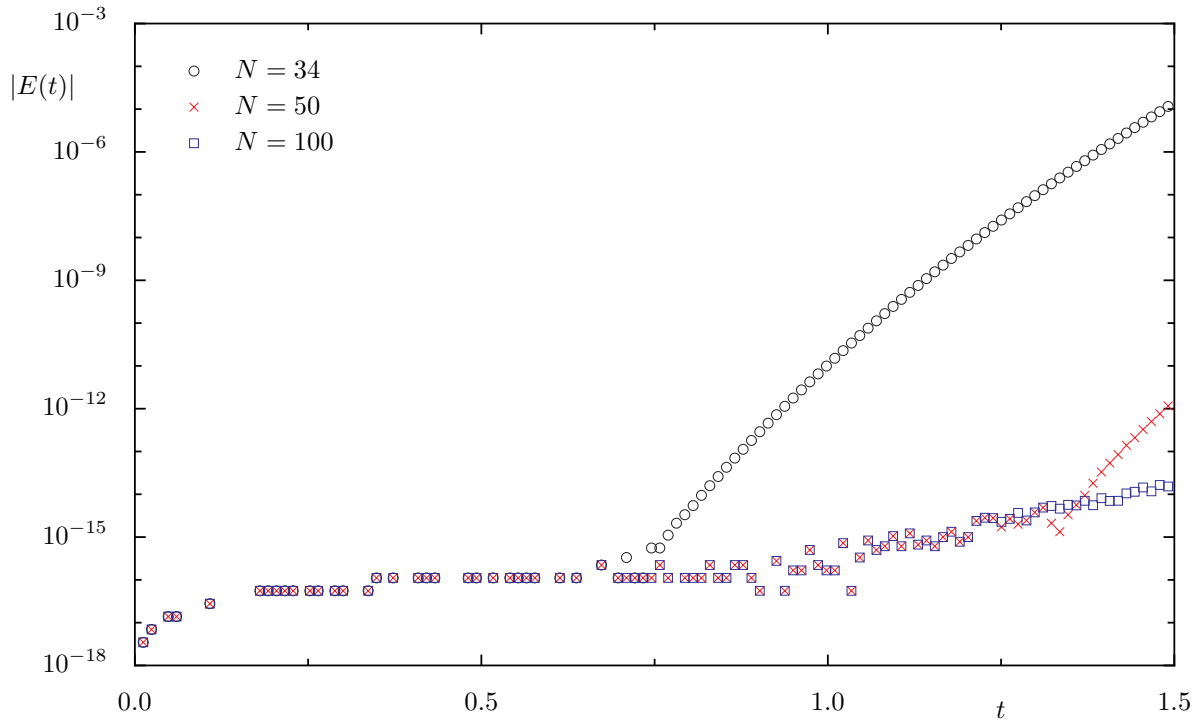}
\end{center}
\caption{\label{fig:epsilon-radius}%
Plot of the error $E(t)=\arrowvert \arrowvert f\circ \Pu(t)-\Pu(\lambdau t)\arrowvert \arrowvert$ for different degrees $N$ of the polynomial for $\delta=1.0$. This shows that with 100 coefficients and $t$ up to
1.5 the error is below $2\cdot 10^{-14}$.}
\end{figure}

Using this prescription, we shall keep terms up to order $N=34$ of our series, for which we numerically obtain that $\tau = 0.75$ provides an $\varepsilon$-radius of validity, when we choose $\varepsilon=10^{-15}$, see figure~\ref{fig:epsilon-radius}. For $t>\tau$ the error strongly increases.
As also shown in this figure, increasing the order to $N=100$ we can go as far as $\tau=1.5$ for  $\varepsilon=2\cdot 10^{-14}$, which turns out to be sufficient for accurately locating the primary (and all subsequent) homoclinic intersections, as explained below. Note, that in the definition
of the error determining the $\varepsilon$-radius of validity,
the difference between points at $\lambdau t$ is considered.
Thus, for practical purposes we may assume that the parametrization method provides satisfactory results up to $\lambdau \tau$.

To obtain an equally good polynomial approximation $\Ps(t)$ for the stable manifold $\Ws(0, 0)$ of the origin, we repeat the above steps, replacing $\lambdau$ with the stable eigenvalue $\lambdas$ in equation~\eqref{eq:defining-equation-stable-manifold}, and proceed to solve for the coefficients of the parametrization of the manifold. First, we replace the coefficients of the first order terms with the coordinates of the stable eigenvector. Actually, in the $\delta = 1$ case one may exploit the fact that $f$ is differentially conjugate to its inverse under $\rho(x,y)=(y,x)$, and obtain its stable manifold as the image of its unstable manifold under $\rho$.

This symmetry is directly reflected in the coefficients of the polynomials of the parametrization method:
\begin{lemma}\label{lemma:symm-coeffs}
  Let $\Su(t)=\left(\sum_{n=0}^{+\infty}a_nt^n, \sum_{n=0}^{+\infty}b_nt^n\right)$
  be a parametrization of the unstable
  manifold at the origin of the symplectomorphism $f$ above. Then, a parametrization
  of the stable manifold of $f$ at the origin is
  $\Ss(t)=\left(\sum_{n=0}^{+\infty}b_nt^n,\sum_{n=0}^{+\infty}a_nt^n\right)$.
\end{lemma}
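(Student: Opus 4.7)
The plan is to exploit the reversing symmetry of the symplectic map $f$ recorded in Section~\ref{hyperbolicity}, namely $f\circ\rho = \rho\circ f^{-1}$ with $\rho(x,y)=(y,x)$, together with the identity $\lambdau\lambdas=\delta=1$ that holds in the symplectic case. Since $\rho$ is simply the coordinate swap, the candidate $V(t) := \rho\bigl(\Su(t)\bigr) = \bigl(\sum_{n} b_n t^n,\, \sum_{n} a_n t^n\bigr)$ is exactly the series displayed in the lemma, so the whole task reduces to showing that this $V$ verifies the defining equation~\eqref{eq:defining-equation} with stable linear part $\fs(u)=\lambdas u$.

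First I would apply $\rho$ to both sides of $f(\Su(t)) = \Su(\lambdau t)$ and push $\rho$ through $f$ using $\rho\circ f = f^{-1}\circ\rho$, obtaining $f^{-1}(V(t)) = V(\lambdau t)$. Then I would substitute $t = \lambdas u$; the point of this step is that $\lambdau\lambdas=1$ collapses $V(\lambdau\lambdas u)$ to $V(u)$, so the relation becomes $f^{-1}(V(\lambdas u)) = V(u)$. Applying $f$ now yields
\[
  f\bigl(V(u)\bigr) = V(\lambdas u),
\]
which is exactly the conjugacy equation $f\circ\Ss = \Ss\circ\fs$.

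To conclude $V=\Ss$, I would verify the normalization conditions $(a)$ and $(b)$ listed for the parametrization method: $V(0)=(0,0)$ is immediate from $a_0=b_0=0$, and $V'(0)=(b_1,a_1)$ is the unstable eigenvector with its coordinates swapped, which at $c=-5/2$ is $-1$ times the normalized stable eigenvector and hence a nonzero scalar multiple of it. Since the formal power-series parametrization is pinned down by the conjugacy equation once its linear part is prescribed up to scaling, $V$ is a legitimate parametrization of $\Ws(0,0)$ with exactly the stated series coefficients.

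The main obstacle I anticipate is the bookkeeping at the reparametrization $t\mapsto\lambdas u$: one must invoke $\lambdau\lambdas=1$ at precisely this step, for otherwise stray factors $\lambdau^{n}$ and $\lambdas^{n}$ survive in the coefficients of $V$ and the clean swap $(a_n,b_n)\mapsto(b_n,a_n)$ is destroyed. This same remark explains why the statement is specific to the symplectic regime: when $\delta<1$ the identity $\lambdau\lambdas=1$ fails, $f$ is no longer reversible under $\rho$, and the symmetry argument collapses exactly at this substitution.
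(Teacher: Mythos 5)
Your proof is correct and takes a genuinely different route from the paper. The paper works entirely at the level of the coefficient recursions: it writes down the linear systems for $(a_n,b_n)$ and for the putative stable coefficients $(C_n,D_n)$, posits $C_n=b_n$, $D_n=a_n$, and verifies this by substituting and repeatedly using $\lambdas=\lambdau^{-1}$ together with the first-equation relation $b_k=\lambdau^k a_k$ to convert one cubic convolution into the other. Your argument instead acts at the level of the defining conjugacy equation~\eqref{eq:defining-equation} using the reversing symmetry $f\circ\rho=\rho\circ f^{-1}$, pushes $\rho$ through, and reparametrizes $t\mapsto\lambdas u$ so that $\lambdau\lambdas=1$ collapses the argument; you then invoke uniqueness of the formal parametrization given its linear part to identify $V=\rho\circ\Su$ with a parametrization of $\Ws$. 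The trade-off: the paper's computation is fully elementary and self-contained (no appeal to uniqueness of the parametrization or to the reversing symmetry), and makes the coefficient-level mechanism explicit; your argument is shorter and more structural, immediately generalizes to any $\rho$-reversible symplectomorphism of this type, and cleanly isolates exactly where the hypothesis $\delta=1$ enters (both through $\lambdau\lambdas=\delta$ and through the reversibility itself). One small point worth stating explicitly, which you handle correctly but in passing: the parametrization is unique only up to the scaling of the linear term, so the fact that $V'(0)$ is $-1$ times the normalized stable eigenvector rather than the eigenvector itself is harmless for the conclusion ``$\Ss$ is \emph{a} parametrization'' — it merely corresponds to the reparametrization $t\mapsto -t$, and the paper's coefficient identity $C_n=b_n$, $D_n=a_n$ implicitly makes the same choice of linear normalization.
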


\begin{proof}
Following from the system of equations~\eqref{eq:equation-system-2d}
with $\delta=1$ and $b_0=0$, the coefficients $a_n,\ b_n$ of $\Su(t)$
satisfy the following system of equations
 \begin{equation}
 \label{mancon1}
  -\lambdau^na_n+b_n=0\\
 \end{equation}
 \begin{equation}
 \label{mancon11}
  -a_n+(c-\lambdau^n)b_n=-3\sum_{i=1}^{n-1}\sum_{j=0}^{i}b_{n-i}b_{i-j}b_j,
 \end{equation}
Let us now suppose that
$\Ss(t)=(\sum_{n=0}^{+\infty}C_nt^n,\sum_{n=0}^{+\infty}D_nt^n)$ represents  the parametrization of the
stable manifold of the origin. Note that, since $f$ is a symplectomorphism, the stable eigenvalue
$\lambdas$ equals $\lambdau^{-1}$. Clearly, the coefficients $C_n,\ D_n$ satisfy the following system:
\begin{equation}
\label{mancon2}
  -\lambdas^nC_n+D_n=0\\
\end{equation}
\begin{equation}
\label{mancon21}
  -C_n+(c-\lambdas^n)D_n=-3\sum_{i=1}^{n-1}\sum_{j=0}^{i}D_{n-i}D_{i-j}D_j.
\end{equation}

We claim that $C_n=b_n,\ D_n=a_n$. Indeed it follows from (\ref{mancon2}) that:
\begin{equation}
-\frac{1}{\lambdau^n}C_n+D_n=0
\qquad \Leftrightarrow\qquad
 -\frac{1}{\lambdau^n}b_n+a_n=0,
\end{equation}
which is equivalent with (\ref{mancon1}), while from (\ref{mancon21}) we get
\begin{subequations}
\begin{alignat}{5}
 && -C_n+(c-\lambdas^n)D_n &\;\;=\;\;&& -3\sum_{i=1}^{n-1}\sum_{j=0}^{i}D_{n-i}D_{i-j}D_j\\
\Leftrightarrow \qquad &&
   -C_n+(c-\frac{1}{\lambdau^n})D_n &\;\;=\;\;&& -3\sum_{i=1}^{n-1}\sum_{j=0}^{i}D_{n-i}D_{i-j}D_j\\
 \Leftrightarrow \qquad && -b_n+(c-\frac{1}{\lambdau^n})a_n &\;\;=\;\;&&-3\sum_{i=1}^{n-1}\sum_{j=0}^{i}a_{n-i}a_{i-j}a_j\\
\Leftrightarrow \qquad && -b_n+(c-\frac{1}{\lambdau^n})a_n &\;\;=\;\;&& -3\sum_{i=1}^{n-1}\sum_{j=0}^{i}\frac{1}{\lambdau^{n-i}}b_{n-i}\frac{1}{\lambdau^{i-j}}b_{i-j}\frac{1}{\lambdau^{j}}b_j\\
 \Leftrightarrow \qquad && -a_n+(c-\lambdau^n)b_n &\;\;=\;\;&& -3\sum_{i=1}^{n-1}\sum_{j=0}^{i}b_{n-i}b_{i-j}b_j,
\end{alignat}
\end{subequations}
leading us back to (\ref{mancon11}). Thus, $C_n=b_n$ and $D_n=a_n$ as claimed.
\end{proof}

So, either by solving the defining equation, or making use of
\ref{lemma:symm-coeffs}, we arrive at
a polynomial $\Ps(t)$ of degree $N=34$, which is a satisfactory
approximation of the stable manifold $\Ss(t)$, in the sense of \ref{def:epsilon-radius}
and with the same radius of validity.

Now, since the polynomial curve $\Pu:[-\tau, \tau] \rightarrow \mathbb{R}^2$
provides a good approximation of the local unstable
manifold of the origin, we may iterate it using the mapping
$f$ to produce an approximation of the unstable manifold.
The same holds for $\Ps(t)$, from which
the corresponding stable manifold $\Ws$ can be obtained
by iteratively applying $f^{-1}$.

\begin{figure}[t]
\begin{center}
\includegraphics[scale=1.2]{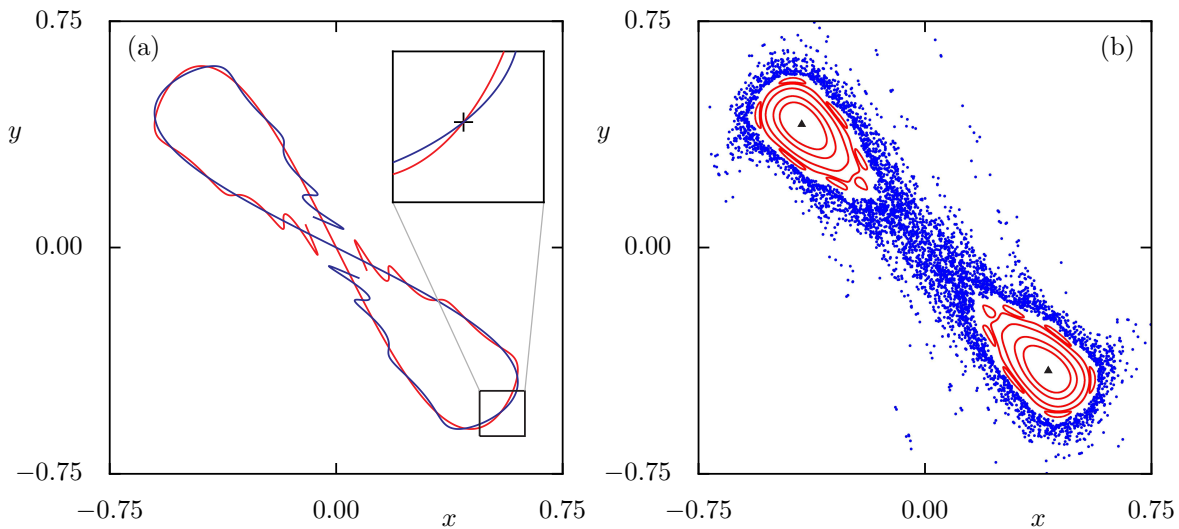}
\end{center}
 \caption{\label{2dintersectiond1}%
          (a) Stable (blue) and unstable (red) manifolds computed using
          the parametrization method for $\delta=1$.
          The inset shows a magnification
          around the primary homoclinic intersection
          at $(\xh, \yh) \approx (0.54527107, -0.54527107)$
          which is indicated by a cross.
          (b) Regular orbits (red curves) around the periodic points of
          period 2 (black triangle) and several irregular orbits (blue dots)
          escaping
          via the homoclinic tangle.}
\end{figure}

Figure \ref{2dintersectiond1}(a) shows the stable and unstable manifolds of the origin for $\delta=1$. To obtain this plot, the parametrization method
was used up to $t=t_{\text{max}}$ with $t_{\text{max}}=0.42$.
For example, for the unstable manifold, the segment obtained for
$t\in [t_{\text{max}}/\lambdau, t_{\text{max}}]$ is iterated up to $6$ times
using $f$ and then the corresponding segment of the stable manifold was obtained by symmetry. Note that, as the fixed point is inverse hyperbolic, one obtains
alternatingly segments lying in the second and fourth quadrant.

Since a homoclinic point of transverse intersection between the stable and unstable
manifolds exists, the Birkhoff-Smale theorem can be invoked to guarantee the
existence of an infinity of transverse intersections, a phenomenon known as
homoclinic chaos \cite{wiggins1}.
This explains the scattered points shown in the phase space of the mapping
at $\delta=1$, plotted in figure~\ref{2dintersectiond1}(b),
where several orbits started in the region close to the
hyperbolic fixed point at the origin eventually escape to infinity after
a few iterations via the homoclinic tangle. Some elliptic periodic orbits as well as tori encircling the stable period--$2$ points in the 2nd and 4th quadrant are also shown.

%============================================================================
\subsection{Homoclinic intersections}
\label{homoclinicity}
%============================================================================
We now wish to compute an approximation of the homoclinic intersection, i.e.\ the point
$(\xh, \yh)$ at which $\Ws(0,0)$ and $\Wu(0,0)$ intersect transversely.
This means, that there are natural numbers $\numu, \nums$, along with $\tu, \ts\in \mathbb{R}$ such that
$f^{\numu}(\Pu(\tu))=f^{-\nums}(\Ps(\ts))=(\xh, \yh)$.
Furthermore, since the intersection is transversal, the vectors
$\frac{\partial}{\partial \tu}f^{\numu}(\Pu(\tu))$ and
$\frac{\partial}{\partial \ts}f^{-\nums}(\Ps(\ts))$ are independent.

To evaluate such homoclinic points, we define the map $\Phi:\mathbb{R}\times\mathbb{R}\rightarrow \mathbb{R}^2$, as $\Phi(\tu,\ts)=f^{\numu}(\Pu(\tu))-f^{-\nums}(\Ps(\ts))$ and search for its zeros. One solution, of course, is the point  $(\tu,\ts)=(0,0)$ corresponding to the origin. Non--trivial roots $(\tu,\ts)$ of this map for $(\numu,\nums)\in \mathbb{N}\times \mathbb{N}$
correspond to transverse homoclinic points of the mapping $f$. Validated computations can now be used to analytically prove the existence of such solutions for fixed parameter values, as described in \cite{jdm, HarCanFigLuqMon2016, Mir2015b}. Here however, we are more interested in accurately obtaining these solutions, while permitting the parameters to vary, until they no longer exist. Note that this is also the subject of reference \cite{Capi-Zgli}, where the authors mention the possibility of applying the parametrization method, along with their technique, to obtain computer--assisted proofs of
transversal intersections of manifolds.
Here we proceed as described below.

To determine the homoclinic intersections for a sequence of decreasing values of $\delta$ it is convenient to start from an already computed homoclinic intersection at one $\delta$ value and use the corresponding $(\tu,\ts)$ as a starting point for finding the solution of $\Phi(\tu,\ts)= (0, 0)$ for a slightly smaller $\delta$. If no solution is found, the step size is reduced, so that effectively a bisection in $\delta$ is performed, approximating the critical value $\deltac$ for which no homoclinics exist.

To determine the non--trivial roots of $\Phi$
it turns out to be particularly convenient to use polynomials of degree $N=100$
as, under these circumstances, the range for $\tu, \ts$ can be extended to $[-1.6, 1.6]$
while the manifolds are still computed with sufficient accuracy,
namely with $|E(t)| < 4 \cdot 10^{-14}$.
Thus, we do not have to worry about the possibility of non-intersecting segments, noting also that segments of each manifold get mapped to the other quadrant across $(0,0)$ due to the inverse hyperbolicity of the fixed point. For $\delta=1.0$, for example, we obtain a zero of $\Phi(\tu,\ts)$ for $(\tu,\ts) \approx (1.5849, -1.5849)$ and $\numu=\nums=0$. This means that the manifolds can be represented up to the homoclinic point with high enough accuracy, using the parametrization method, while no additional application of the mapping $f$ or its inverse is necessary. The corresponding homoclinic point is located at $(\xh,\yh)=(0.545271067753899,-0.545271067753900)$.
The fact that both $\tu + \ts = 0$ and $\xh + \yh=0$ within our numerical accuracy is already a good test of the quality of the numerically determined homoclinic point, particularly because the symmetry of the mapping for $\delta=1.0$ has not been used in the computations.

\begin{figure}[t]
\begin{center}
\includegraphics[scale=1.2]{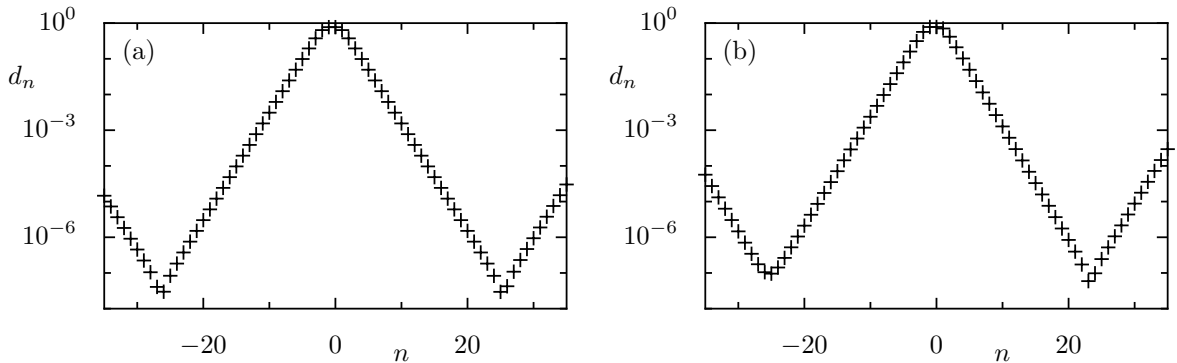}
\end{center}
\caption{\label{fig:homoclinic-points-accurarcy-2d}
Plot of the distance $d_n$ of the iterates $f^n(\xh, \yh)$ from the origin. Note that the iterates of our computed homoclinic point $(\xh, \yh)$ pass by the origin but stay away from it by an amount that provides a measure of the inaccuracy of the calculation. (a) $\delta=1.0$ and (b) $\delta=0.9714375$.
}
\end{figure}

To test the accuracy of the computed homoclinic intersection $(\xh, \yh)$ we determine the distance $d_n$ of $f^n(\xh, \yh)$ from the origin
for both positive and negative $n$. For positive $n$ the iterates of the homoclinic point
approach the origin along the stable manifold $\Ws$, while, for negative $n$ the approach is along the unstable manifold $\Wu$.
Any inaccuracy of the determined homoclinic intersection point (including additional round-off errors when applying $f^n$) implies that $d_n$ eventually will increase again. This means that the inaccurate orbit will depart again from the origin along $\Wu$ for positive $n$ and along $\Ws$ for negative $n$. The smallest distance to the origin thus gives a
measure of the inaccuracy of the numerically determined homoclinic intersections,
see figure~\ref{fig:homoclinic-points-accurarcy-2d}(a) for $\delta=1.0$ and (b) for $\delta<1$.

So far we have relied on a graphical verification that the homoclinic intersection is actually transversal, i.e.\ that the vectors $\frac{\partial}{\partial \tu}f^{\numu}(\Pu(\tu))$ and $\frac{\partial}{\partial \ts}f^{-\nums}(\Ps(\ts))$ are independent. Due to the parametrization this can be easily checked by computing the determinant of the vectors $Df^{\numu} \frac{\partial}{\partial \tu} (\Pu(\tu))$ and $Df^{-\nums} \frac{\partial}{\partial \ts} (\Ps(\ts))$ for $\numu=\nums=0$. This is shown in figure~\ref{fig:determinant}. With decreasing $\delta$ the determinant becomes smaller which means that the area spanned by the two tangent vectors becomes smaller and smaller until it becomes zeros at $\deltac$. The plot at figure~\ref{fig:determinant} behaves like $\sqrt{\delta -\deltac}$. This suggests that at the critical parameter value $\deltac$ the tangency of the stable and unstable manifolds is quadratic, which is a manifestation of the genericity of their intersection.
A fit to $a \sqrt{\delta -\deltac}$ gives
for the critical parameter $\deltac=0.9713965579$.

\begin{figure}[t]
\begin{center}
\includegraphics[scale=1.2]{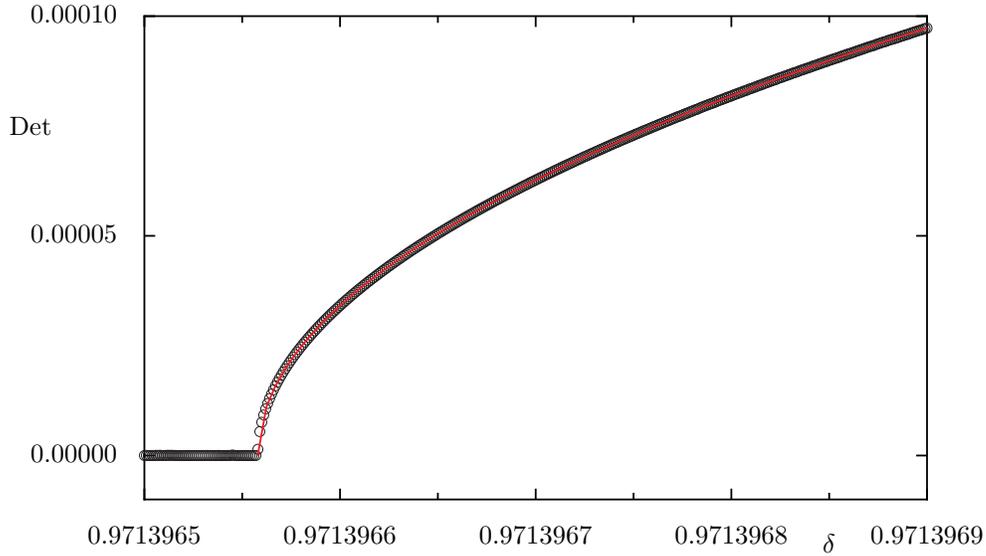}
\caption{\label{fig:determinant}
Plot of the determinant of the two vectors tangent to the stable and unstable manifolds at the homoclinic point, as a function of $\delta$. The determinant is zero below the critical parameter value $\deltac$, while it tends to zero in a manner implying a quadratic tangency at this point.
The red line is a fit to $a \sqrt{\delta -\deltac}$.
}
\end{center}
\end{figure}

%--------------------------------------------------------------------------------------------------------
\subsection{Homoclinic tangency: the $\delta \simeq 0.971397$ case}

Let us now follow the primary homoclinic intersection of the two manifolds as we decrease
$\delta$.  Numerically we find a solution of
$\Phi(\tu,\ts)=(0, 0)$ as long as
$\delta \in [\deltac, 1]$ with $\deltac=0.971397$, meaning that below this $\deltac$ homoclinic orbits no longer exist. Indeed, for $\delta<\deltac$ we do not obtain zeros of $\Phi(\tu,\ts)$ with the required accuracy, i.e.\ components of $|\Phi(\tu,\ts)|$ become larger than $10^{-15}$. Figure~\ref{fig:homoclinic-points-accurarcy-2d}(b)
shows the corresponding distances $d_n$ for $\delta=\deltac$ and demonstrates that the numerically determined homoclinic intersection is of comparable accuracy as in the case $\delta=1.0$.

For the above mentioned value of $\delta=\deltac$ we compute the invariant
manifolds using the parametrization method
(followed by iterating the local manifolds).
The result is shown in figure \ref{fig:2dtangency}(a)
and visually confirms that this is approximately the
parameter at which the manifolds become tangent.
For even smaller $\delta=0.96$ one clearly
finds that the stable and unstable manifold no longer
intersect, as seen in figure \ref{fig:2dtangency}(b).

\begin{figure}[b]
\begin{center}
\includegraphics[scale=1.2]{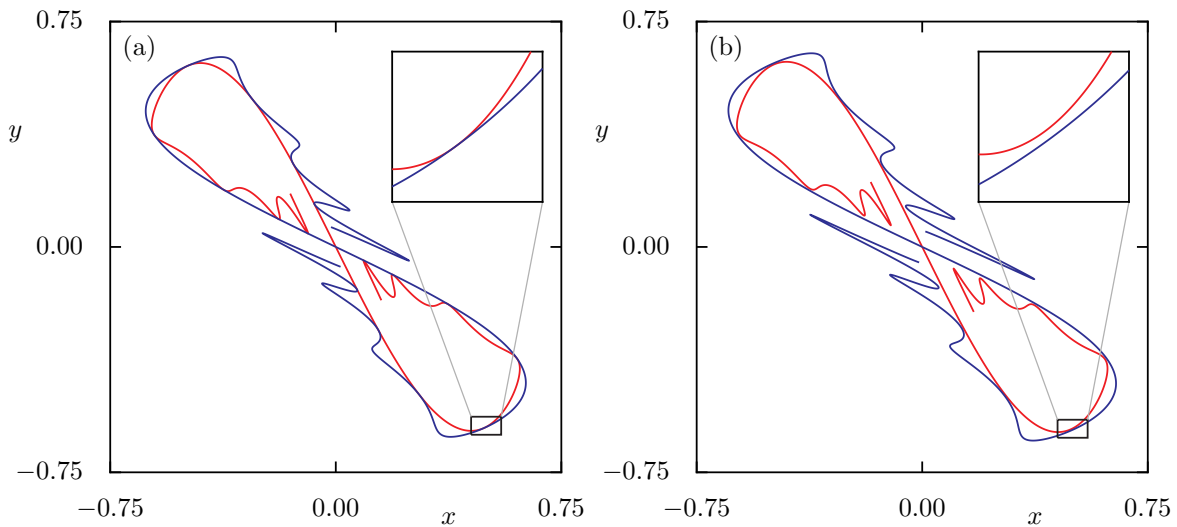}
\end{center}
\caption{\label{fig:2dtangency}%
(a) Tangency of the stable and unstable manifolds shown at $\delta=0.971397$.
(b) For $\delta=0.96$ the stable and unstable manifolds no longer
intersect.}
\end{figure}

We now turn our attention to linearly coupled 2--D mappings of the type studied above. Our
objective is to calculate efficiently and accurately primary homoclinic orbits in the phase space of 4--D maps approximating breathers in coupled Hamiltonian chains. We wish to find out how computationally demanding is this task, and compare the results with analogous findings in the 2--D case.

%==================================================================
\section{Two coupled cubic systems}
\label{4-d case}

To investigate breather interactions in two linearly coupled chains of
1--dimensional Hamiltonian systems, the approach discussed in the introduction
leads to the identification of homoclinic orbits of a 4--D map of the form:
\begin{subequations}
\begin{eqnarray}
A_{n+1}-cA_n+\delta A_{n-1}=3A^3_n+b(A_n-B_n)\\
B_{n+1}-cB_n+\delta B_{n-1}=3B^3_n-b(A_n-B_n).
\end{eqnarray}
\end{subequations}
where $A_n$ and $B_n$ are the leading terms of the Fourier coefficients corresponding to the discrete breathers in the first and second chain respectively. Setting $A_{n-1}=x_1,\ A_n=y_1,\ B_{n-1}=x_2,\ B_n=y_2$,
we define the mapping $f:\mathbb{R}^4 \rightarrow \mathbb{R}^4$
\begin{equation} \label{eq:our-4d-map}
f(x_1,y_1,x_2,y_2)
  =\FOURVEC{y_1}
            {cy_1-\delta x_1+3y_1^3+b(y_1-y_2)}
            {y_2}
            {cy_2-\delta x_2+3y_2^3-b(y_1-y_2)},
\end{equation}
which is a diffeomorphism with inverse
$f^{-1}:\mathbb{R}^4\rightarrow \mathbb{R}^4$ given by
\begin{equation}
  f^{-1}(x_1,y_1,x_2,y_2) =
    \FOURVEC{\frac{1}{\delta}((c+b)x_1+3x^3_1-bx_2-y_1)}
            {x_1}
            {\frac{1}{\delta}((c+b)x_2+3x^3_2-bx_1-y_2)}
            {x_2}.
\end{equation}
This map possesses a number of fixed points. However, we are only
interested in the origin, which must be a saddle fixed point as in the 2--dimensional case. Thus, we choose parameter values for which the origin of the above 4--D map possesses a 2--dimensional stable manifold, and a 2--dimensional unstable manifold, corresponding to pairs of real eigenvalues with $|\lambda_{1,2}|>1$ and $|\lambda_{3,4}|<1$ respectively.

Let us suppose that $\Su:\mathbb{R}^2\rightarrow \mathbb{R}^4$ is the parametrization of the unstable manifold of the origin, corresponding to the eigenvalues $\lambda_1,\ \lambda_2$.
If
\begin{equation}
\Su(u,v)=\left(\sum_{n=0}^{+\infty}\sum_{m=0}^{+\infty}a_1^{nm}u^nv^m,\sum_{n=0}^{+\infty}\sum_{m=0}^{+\infty}a_2^{nm}u^nv^m,\sum_{n=0}^{+\infty}\sum_{m=0}^{+\infty}a_3^{nm}u^nv^m,\sum_{n=0}^{+\infty}\sum_{m=0}^{+\infty}a_4^{nm}u^nv^m\right)
\end{equation}
represents its power--series expansion, where $a_i^{nm},\ i\in\{1,2,3,4\}$,
are coefficients of the mononomials $u^nv^m$, the defining equation of the manifold becomes
\begin{equation}
 f\circ \Su(u,v)=\Su(\lambda_1 u,\lambda_2 v).
\end{equation}
The left--hand side of this equation reads
\begin{equation*}
 \FOURVEC{\displaystyle \sum_{n,m}a_2^{nm}u^nv^m}
         {\displaystyle (c+b)\sum_{n,m}a_2^{nm}u^nv^m-\delta \sum_{n,m} a_1^{nm}u^nv^m+3\left(\sum_{n,m}a_2^{nm}u^nv^m\right)^3-b \sum_{n,m}a_4^{nm}u^nv^m}
         {\displaystyle \sum_{n,m}a_4^{nm}u^nv^m}
         {\displaystyle (c+b)\sum_{n,m}a_4^{nm}u^nv^m-\delta \sum_{n.m}a_3^{nm}u^nv^m+3\left(\sum_{n,m}a_4^{nm}u^nv^m\right)^3-b \sum_{n,m}a_2^{nm}u^nv^m}
\end{equation*}
while the right--hand side equals
\begin{equation*}
 \FOURVEC{\displaystyle \sum_{n,m}a_1^{nm}\lambda_1^n \lambda_2^m u^nv^m}
         {\displaystyle \sum_{n,m}a_2^{nm}\lambda_1^n \lambda_2^m u^nv^m}
         {\displaystyle \sum_{n,m}a_3^{nm}\lambda_1^n \lambda_2^m u^nv^m}
         {\displaystyle \sum_{n,m}a_4^{nm}\lambda_1^n \lambda_2^m u^nv^m},
\end{equation*}
where by $\sum_{n,m}$ we denote double summation over $n$ and $m$ ranging from 0 to $\infty$.

Equating now, as before, terms of the same degree, we arrive at
the following system of equations for the coefficients
\begin{subequations}
\begin{alignat}{4}
 -\lambda_1^n \lambda_2^m a_1^{nm}+a_2^{nm} &=&& \,\,0, \\
 -\delta a_1^{nm}+(c+b-\lambda_1^n \lambda_2^m)a_2^{nm}-ba_4^{nm} &=&& -3 \sum_{k=0}^{n}\sum_{l=0}^{m}\sum_{i=0}^{k}\sum_{j=0}^{l}a_2^{n-k,m-l}a_2^{k-i,l-j}a_2^{i,j},\\
 -\lambda_1^n \lambda_2^m a_3^{nm}+a_4^{nm} &=&& \,\, 0, \\
 -ba_2^{nm}-\delta a_3^{nm}+(c+b-\lambda_1^n \lambda_2^m)a_4^{nm} &=&& -3 \sum_{k=0}^{n}\sum_{l=0}^{m}\sum_{i=0}^{k}\sum_{j=0}^{l}a_4^{n-k,m-l}a_4^{k-i,l-j}a_4^{i,j}.
\end{alignat}
\end{subequations}

Note that the sums on the right side contain terms of the form
$a_i^{nm},\ i\in\{2,4\}$, as well. Although these terms should
be isolated and transferred to the left side of the equations, we prefer to write them more ``compactly'' in the above form.

Cumbersome as they may seem, these equations are again linear with respect to the
unknown $a_i^{nm}$, and may be solved immediately provided the terms of
lower degree are known. The above equations will be crucially used in what follows to
locate homoclinic points of the 4--D map $f$.

%------------------------------------------------------------------
\subsection{Error estimates for the parametrization of the manifolds}

To study the 4--D map \eqref{eq:our-4d-map} we shall start from the uncoupled case $b=0,\ \delta=1$, and continue with the coupled case for small positive values of $b$. In this study, we set $b=0.1$ and continue the homoclinic intersections for $\delta<1$ until they completely disappear. The question is whether these intersections persist as robustly as they do in the 2--dimensional case. To find out, we shall first consider an error estimate for the polynomial approximation of the series representing the (un)stable manifold.

Let us, therefore, define an analogous error function as in the 2--dimensional case
\begin{equation}
 E:[-2,2]\times [-2,2]\rightarrow \mathbb{R},\
        E(u,v)=\lVert f \circ \Pu(u,v)-\Pu(\lambda _1u,\lambda _2v)\rVert,
\end{equation}
where $\Pu$ stands for the polynomial approximation of the unstable manifold,
consider polar coordinates $(u,v)=(r\cos\theta,r\sin\theta)$ and plot $\lvert E(r,\theta)\lvert$ as a function of $r\in [0,2]$, for various values of $\theta$,
see figure~\ref{fig:err-4d}.

\begin{figure}
\begin{center}
\includegraphics[scale=1.2]{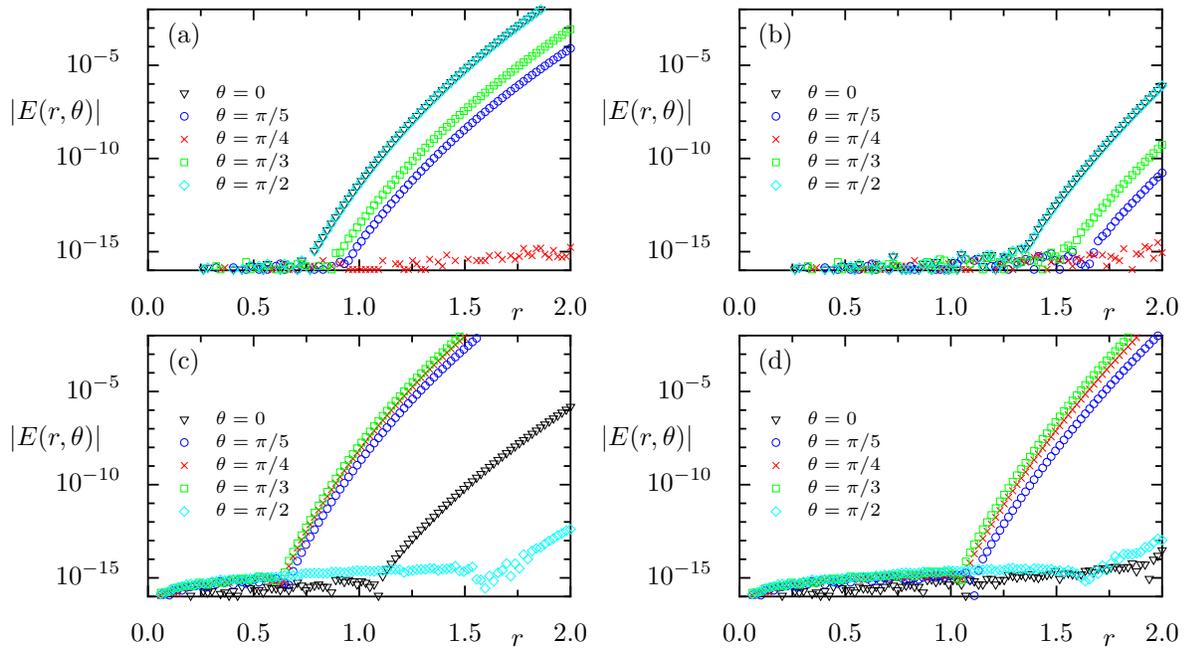}
\end{center}
\caption{\label{fig:err-4d}
Evaluation of the error function $E(r,\theta)$ vs. $r$ for the parametrization method for different values of $\theta$ and (a) $b=0.0$, $N=34$, (b) $b=0.0$, $N=50$, (c) $b=0.1$, $N=34$, and (d) $b=0.1$, $N=50$. Note how the accuracy of $10^{-15}$ extends to longer segments of the manifolds as $N$ increases.
        }
\end{figure}

We first observe that, for all $b\in [0,0.1]$, keeping terms up to order 50 in the polynomial representation $\Pu$ of the unstable manifold yields a region of validity with $r=1$ corresponding to an error magnitude of order $10^{-15}$. An analogous statement
holds for the stable manifold as well. We thus keep the interval $r\in [0,1]$ as the domain of definition of our approximations $\Pu(\uu,\vu)$ and $\Ps(\us,\vs)$. Our aim is to locate non-trivial zeros of the mapping
\begin{equation}
 \Phi (\uu,\vu,\us,\vs)=f^{\numu}(\Pu(\uu,\vu))-f^{-\nums}(\Ps(\us,\vs)).
\end{equation}
Note, that as we will be using a sufficiently high order for the polynomials
$\Pu$ and $\Ps$, the homoclinic intersections already occur
for values of $\uu, \vu, \us, \vs < 1.16$ when using $\numu = \nums = 0$.
As in the 2--D case we assume that the parametrization method provides
accurate results for this slightly extended range.

%===================================================================================
\subsection{Homoclinic points for $b=0$}

For $b=0$ our system reduces to two uncoupled 2--D maps of the form studied in previous sections. Thus the geometry in phase space is given by the direct product of two independent maps in $(x_1, y_1)$ and $(x_2, y_2)$. In each of these 2--D maps the origin is a fixed point and $(\xe, \ye) = (\pm 1/\sqrt{6}, \mp 1/\sqrt{6})$ are the coordinates of an elliptic periodic point of period $2$.
This implies that for $b=0$ the 4--D map has: (i) The origin $(x_1, y_1, x_2, y_2) = (0, 0, 0, 0)$ as a hyperbolic-hyperbolic fixed point, (ii) two elliptic-hyperbolic period--$2$ orbits at $(x_1, y_1, x_2, y_2) = (\pm \xe, \pm \ye, 0, 0)$ and $(x_1, y_1, x_2, y_2) = (0, 0, \pm \xe, \pm \ye)$, (iii) two elliptic-elliptic period--$2$ orbits, one at $(\pm \xe, \pm \ye, \pm \xe, \pm \ye)$ and another at $(\pm \xe, \pm \ye, \mp \ye, \mp \ye)$.

\begin{figure}[b]
\begin{center}
\includegraphics[scale=1.2]{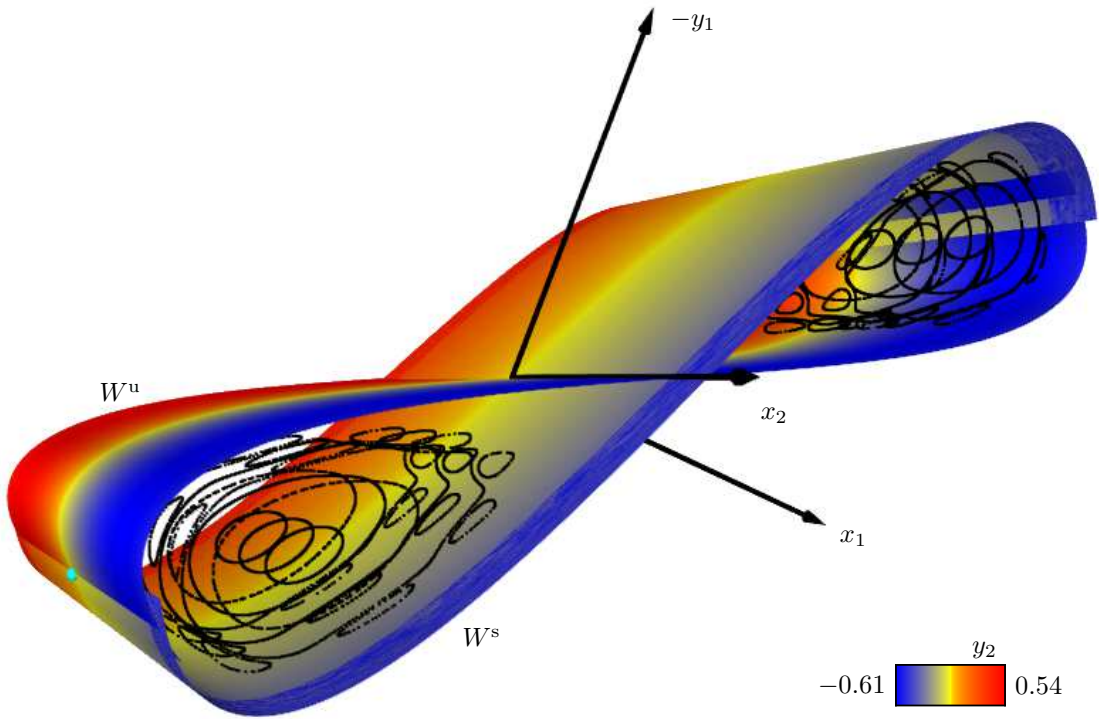}
\end{center}
\caption{\label{fig:uncoupled-intersection}%
Homoclinic intersection shown by a small cyan sphere on the left part of the figure in the uncoupled case $b=0$. Shown are the 2--dimensional stable and unstable manifolds
as projections onto $(x_1, x_2, y_1)$ space, with $y_2$ encoded in color. The black dots
show some regular tori in a 3--dimensional phase space slice representation.
\movierefall}
\end{figure}

In order to visualize the regular dynamics occurring in the 4--dimensional phase space
we use a 3--dimensional phase space slice \cite{RicLanBaeKet2014}.
As a convenient condition for determining our slice we set $y_2^\star = \ye$, since it includes the domain surrounding one point of each elliptic-elliptic orbit of period 2.
Whenever a point $(x_1, y_1, x_2, y_2)$ of a trajectory
fulfills the slice condition $|y_2 - y_2^\star| < 10^{-4}$
the remaining coordinates $(x_1, y_1, x_2)$ are shown in a 3--D plot.

Figure~\ref{fig:uncoupled-intersection} shows for the uncoupled case
several regular orbits (black dots) surrounding the points of the elliptic-elliptic orbit of period 2. The 2--dimensional manifolds $\Ws$ and $\Wu$
have been computed using the parametrization method. They are embedded in the 4--dimensional phase-space and shown as projections onto $(x_1, x_2, y_1)$. The projected coordinate $y_2$ is encoded in color. Thus a homoclinic intersection
of  $\Ws$ and $\Wu$ occurs at an intersection of the two surfaces, if and only if at the same point the color, i.e.\ the $y_2$ coordinate, also agrees. This point is indicated by a small cyan sphere on the left side of figure~\ref{fig:uncoupled-intersection}. In this case, the coordinates of the homoclinic points are given by $(\xh, \yh, 0, 0)$ and $(0, 0, \xh, \yh)$,
where $(\xh, \yh)$ are the coordinates of the homoclinic point
of the corresponding  2--D mapping studied in section~\ref{sec:2-dim-map}.

%==================================================================
\subsection{Homoclinic points for $b>0$}

In order to determine the homoclinic intersection for different parameters $b$ and $\delta$ we start from the uncoupled case $b=0$, $\delta=1$ and follow the homoclinic intersections with increasing $b$ until $b=0.1$.
Numerically it is convenient to
use the homoclinic intersection, described by the
parameters $(\uu, \vu, \us, \vs)$, at one
value of $b$ as a starting point for finding the new root at a
slightly different value for $b$. Subsequently, after
we have found the homoclinic intersection at $b=0.1$ with $\delta=1$,
we fix $b=0.1$ and proceed to determine intersections at
lower values of $\delta$.

It turns out that the last intersection of the manifolds, at homoclinic tangency, occurs
at about $\delta = 0.99601$, which is considerably larger (closer to the conservative case) than in the single 2--D map. To test the accuracy of our computations we again consider the distance $d_n$ of $f^n(\xh_1, \yh_1, \xh_2, \yh_2)$ from the origin
for both positive and negative $n$, see figure~\ref{fig:4d-homoclinic-error}, for $\delta=0.997$. Interestingly enough, the magnitude of the observed minimal distance is comparable to the results for the 2--D case.

\begin{figure}
\begin{center}
\includegraphics[scale=1.2]{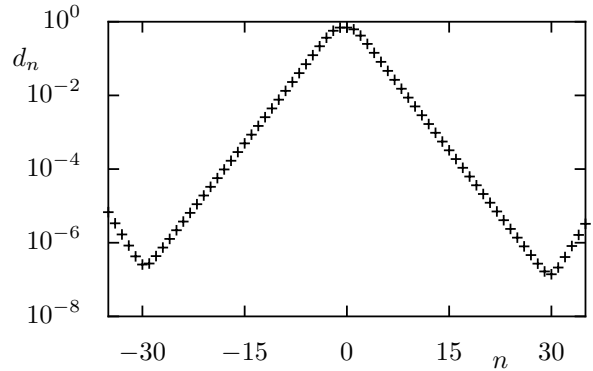}
\end{center}
\caption{\label{fig:4d-homoclinic-error}
Plot of the distance $d_n$ to the origin of iterates $f^n(\xh_1, \yh_1, \xh_2, \yh_2)$
of numerically determined  homoclinic intersection of the 4--D map for
$b=0.1$ and $\delta=0.997$.
}
\end{figure}

\begin{figure}[b]
\begin{center}
\includegraphics[scale=1.2]{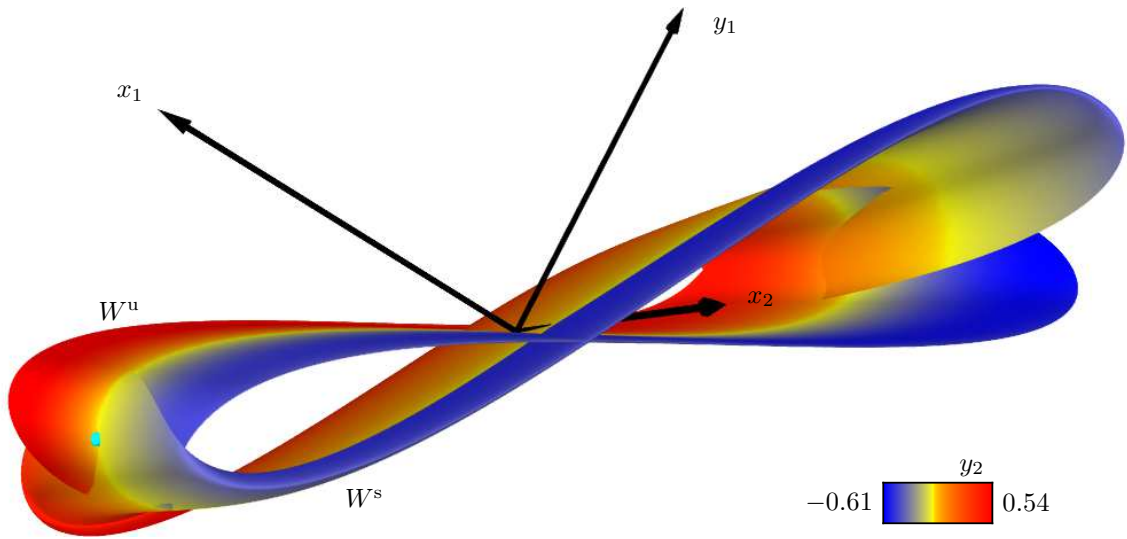}
\caption{\label{fig:coupled-intersection}
Stable and unstable manifolds computed using the parametrization
method for the 4--D map for $b=0.1$ and $\delta=0.997$.
A primary homoclinic intersection point is shown by a cyan sphere on the left side of the map.
\movierefall}
\end{center}
\end{figure}

\begin{figure}
\begin{center}
\includegraphics[scale=1.2]{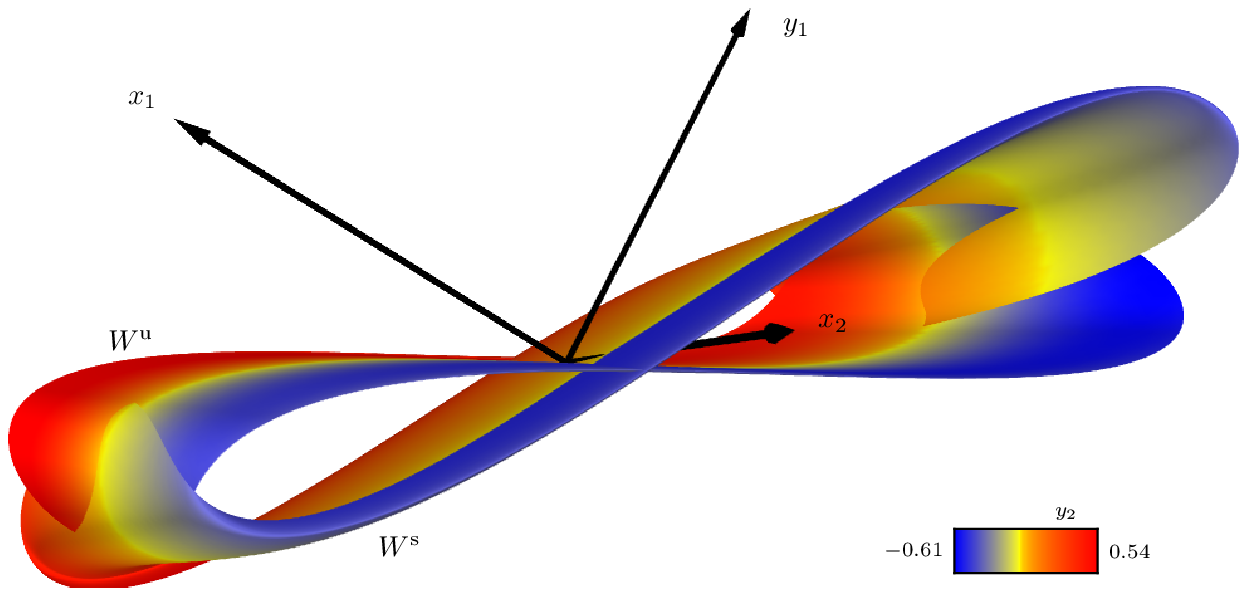}
\caption{\label{fig:coupled-no-intersection}
Stable and unstable manifolds computed using the parametrization
method for the 4--D map for $b=0.1$ and $\delta=0.99$.
No homoclinic intersection is found.
\movierefall}
\end{center}
\end{figure}

Analogously to the 2--D case, we calculated the determinant of the tangent vectors provided by the parametrization method at the homoclinic points, as a function of $\delta$. Similarly to figure~\ref{fig:determinant} the determinant approaches zero like $\sqrt{\delta -\deltac}$, which is a manifestation of the quadratic tangency of the invariant manifolds for $\delta = \deltac$.

Figure~\ref{fig:coupled-intersection}
shows a visualization of the two-dimensional manifolds $\Ws$ and $\Wu$
as projections onto $(x_1, x_2, y_1)$ with $y_2$ encoded in color.
The homoclinic intersection at
$(x_1, y_1, x_2, y_2) = (0.46521450, -0.49858860, -0.08725131, 0.08972831)$
corresponds to an intersection of the manifolds in $(x_1, x_2, y_1)$
and simultaneously in $y_2$, which means that the colors corresponding to the $y_2$ coordinate also agree. This point is indicated by a small cyan sphere at the left part of the figure.
While both the above conditions hold for the example in figure~\ref{fig:coupled-intersection},
the illustration for $b=0.1$ and $\delta = 0.99$ in figure~\ref{fig:coupled-no-intersection}
clearly shows that while the two manifolds intersect at several points in the projection onto $(x_1, x_2, y_1)$, the corresponding $y_2$ coordinates are different, as can be deduced from the different colors of the manifolds at their intersections in the projection.
Thus the homoclinic intersection ceases to exist very quickly as $\delta$ decreases from 1 for fixed $b=0.1$.

Note, that the parametrization method is not only numerically
convenient to compute the homoclinic intersections
of the 2--D manifolds, but also for their visualization.
In contrast to computing the manifolds based on the linearized dynamics
and application of the mapping,
a simple two-dimensional uniform grid in the parametrization
variables $(\uu, \vu)$ and $(\us, \vs)$, respectively,
leads to a well structured representation
of the 2--D manifolds embedded in the 4--D phase space
whose projection can be visualized in a straightforward way.
Thus, for obtaining the manifolds shown in
figures~\ref{fig:uncoupled-intersection},
\ref{fig:coupled-intersection}, and \ref{fig:coupled-no-intersection}
no further application of the map or its inverse is necessary.
Note, however, that this would be required if we were to show more lobes,
as for the used order of polynomials the approximation by
the parametrization method is only accurate until a little beyond the
homoclinic intersection.

%%%%%%%%%%%%%%%%%%%%%%%%%%%%%%%%%%%%%%%%%%%%%%%%%%%%%%%%%%%%%%%%%%%%%%%%%%%%%
\section{Conclusions}
\label{conclusions}
In this paper we studied 2--D and 4--D polynomial maps arising in the approximate evaluation
of the amplitudes of discrete breathers in single and double 1--dimensional Hamitonian lattices.
We began with a 2--D map, which belongs to the generalized H\'{e}non family,
and computed its invariant manifolds using convergent series expansions derived by the parametrization
method. We used parameter values at which tranversal intersections of the invariant manifolds of a
saddle point at the origin are expected to occur, and followed these intersections until they
disappeared by homoclinic tangency. Thus, we were able to verify independently the validity of
these results by direct iteration, showing that at the identified parameters the points we were
following cease to converge to the origin.

We then turned our attention to a 4--D map, obtained from two coupled 2--D maps
of the previous form, resulting from two linearly coupled 1--dimensional Hamiltonian
particle chains. Again we used the parametrization method to solve a system of linear equations
and obtain the invariant manifolds of the saddle at the origin. Using continuation methods, we
showed that despite the more complicated calculations, homoclinic orbits can
be located
without great difficulty and with comparable accuracy as in the 2--D map case. The validity of
our findings was checked again by finding parameter values at which homoclinic orbits disappear through
a tangency of the corresponding manifolds.
What is remarkable, both in the 2--D and in the 4--D case it
was possible to accurately locate homoclinic points
just using the parametrization method
without any additional application of the mapping or its inverse.

Regarding other potential applications of our techniques, it would be very interesting to use them
to approximate discrete breathers in coupled 1--dimensional Hamiltonian lattices, for which to our knowledge
no results are available. Furthermore, the potential use of this approach
to problems described by vector fields (and even p.d.e.'s) is worth
exploring, in view of the effectiveness of the parametrization method in
locating special solutions.

Moreover, higher--dimensional maps are interesting by themselves as they are known to arise in various physical applications like e.g.\ in the case of colliding particle beams of high energy accelerators, where the corresponding equations often appear in polynomial form (see e.g.\ \cite{VIB,BouSko2006}). Recently, we have started looking into a number of potential applications of our methods to physical problems such as those mentioned above and results are expected in future publications.

%%%%%%%%%%%%%%%%%%%%%%%%%%%%%%%%%%%%%%%%%%%%%%%%%%%%%%%%%%%%%%%%%%%%%%%%%%%%%
\section*{Acknowledgements}
%%%%%%%%%%%%%%%%%%%%%%%%%%%%%%%%%%%%%%%%%%%%%%%%%%%%%%%%%%%%%%%%%%%%%%%%%%%%%

One of us (TB) is grateful for discussions on higher dimensional maps with Prof.\ Haris Skokos and Prof.\ Michael Vrahatis, while SA acknowledges conversations on the parametrization method with Prof.\ Spyros Pnevmatikos. Furthermore, AB acknowledges support by the Deutsche Forschungsgemeinschaft under grant KE~537/6--1. The authors would like to thank the referees for a thorough reading of the paper and many valuable comments which helped us considerably improve our manuscript. All \threeD{} visualizations were created using
\textsc{Mayavi}~\cite{RamVar2011}.

~

%%%%%%%%%%%%%%%%%%%%%%%%%%%%%%%%%%%%%%%%%%%%%%%%%%%%%%%%%%%%%%%%%%%%%%%%%%%%%
\section*{References}
%%%%%%%%%%%%%%%%%%%%%%%%%%%%%%%%%%%%%%%%%%%%%%%%%%%%%%%%%%%%%%%%%%%%%%%%%%%%%

\end{document}